\newtheorem{thm}{Theorem}[section]
\newtheorem{lem}[thm]{Lemma}
\theoremstyle{definition}
\newcommand{\nequiv}{\not\equiv}
\let\abs=\envert
\let\ep=\epsilon
\newcommand{\floor}[1]{\left\lfloor#1\right\rfloor}
\newcommand{\Z}{{\mathbf{Z}}}
\newcommand{\Q}{{\mathbf{Q}}}
\newcommand{\OK}{{\mathcal K}}
\newcommand{\OO}{{\mathcal O}}
\newcommand{\p}{{\mathfrak p}}
\newcommand{\q}{{\mathfrak q}}
\theoremstyle{remark}
\begin{document}
\title{A new upper bound for odd perfect numbers of a special form
\footnote{This preprint is a revised version of the paper
which appeared in Colloq. Math. \textbf{156} (2019), 15--23.
See the corrigendum attached in this preprint.}
\footnote{2010 Mathematics  Subject Classification:
Primary 11A05, 11A25; Secondary 11D61, 11J86.}
\footnote{Key words and phrases: Odd perfect numbers; the sum of divisors;
arithmetic functions; exponential diophantine equations.}}
\author{Tomohiro Yamada}
\date{}
\maketitle

\begin{abstract}
We give a new effectively computable
upper bound of odd perfect numbers whose Euler factors
are powers of fixed exponent, improving our old result from \cite{Ymd}.
\end{abstract}

\section{Introduction}\label{intro}

As usual, let $\sigma(N)$ denote the sum of divisors of a positive integer $N$.
$N$ is called perfect if $\sigma(N)=2N$.
Though it is not known whether or not an odd perfect number exists,
many conditions which must be satisfied by such a number are known.

Suppose that $N$ is an odd perfect number.
Euler has shown that
\[
N=p^{\alpha} q_1^{2\beta_1}\cdots q_r^{2\beta_r}
\]
for distinct odd primes $p, q_1, \ldots, q_r$ and positive integers
$\alpha, \beta_1, \ldots, \beta_r$ with $p\equiv \alpha\equiv 1\pmod{4}$.

The special case $\beta_1=\beta_2=\cdots =\beta_r=\beta$
has been considered by many authors.
Steuerwald \cite{St} proved that we cannot have $\beta_1=\cdots=\beta_r=1$.
McDaniel \cite{Mc} proved that we cannot have $\beta_1\equiv\cdots\equiv\beta_r\equiv1\pmod{3}$.
If $\beta_1=\cdots=\beta_r=\beta$, then it is known
that $\beta\neq 2$ (Kanold \cite{Ka1}), $\beta\neq 3$ (Hagis
and McDaniel \cite{HMD}), $\beta\neq 5, 12, 17, 24, 62$
(McDaniel and Hagis \cite{MDH}), and $\beta\neq 6, 8, 11, 14, 18$
(Cohen and Williams \cite{CW}).
In their paper \cite{HMD}, Hagis and McDaniel conjectured
that $\beta_1=\cdots=\beta_r=\beta$ does not occur.
We \cite{Ymd} proved that if $\beta_1=\cdots=\beta_r=\beta$,
then $r\leq 4\beta^2+2\beta+2$ and
\[N<2^{4^{4\beta^2+2\beta+3}}.\]
We call this upper bound for $N$ \textit{the classical bound}.

In the RIMS workshop on analytic number theory in 2014,
we have given an improved upper bound for such numbers,
although this result has been published nowhere
(even in the preprint form or in an unrefereed proceedings).
We proved that $r<2\beta^2+O(\beta\log\beta)$
with an effectively computable implicit constant.
There we used the arithmetic in quadratic fields and
lower bounds for linear forms in logarithms.

In this paper, we shall give a slightly stronger result and a simpler proof,
using less arithmetic in quadratic fields and
linear algebraic argument instead of Baker's method.

\begin{thm}\label{th}
If $N=p^{\alpha} q_1^{2\beta}\cdots q_r^{2\beta}$ with $p, q_1, \ldots, q_r$
distinct primes is an odd perfect number,
then $r\leq 2\beta^2+8\beta+2$ and \[N<2^{4^{2\beta^2+8\beta+3}}.\]
Further, the coefficient $8$ of $\beta$ can be replaced by $7$
if $2\beta+1$ is not a prime, or $\beta\geq 29$.
\end{thm}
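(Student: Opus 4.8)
The plan is to exploit the identity $\sigma(N)=2N$ together with the multiplicativity of $\sigma$. Writing $\sigma(N)=\sigma(p^{\alpha})\prod_{i=1}^{r}\sigma(q_i^{2\beta})=2N$ and noting that each $\sigma(q_i^{2\beta})=1+q_i+\cdots+q_i^{2\beta}$ is a sum of $2\beta+1$ odd terms, hence odd, every prime factor of $\sigma(q_i^{2\beta})$ must lie in $S=\{p,q_1,\dots,q_r\}$. Comparing the exact power of a fixed $q_j$ on the two sides of $\sigma(N)=2N$ yields the basic budget $\sum_{i}v_{q_j}(\sigma(q_i^{2\beta}))=2\beta-v_{q_j}(\sigma(p^{\alpha}))\le 2\beta$ for every $j$, while the power of $p$ satisfies $\sum_i v_p(\sigma(q_i^{2\beta}))=\alpha$. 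These two facts --- prime factors confined to $S$, and a uniform exponent budget $2\beta$ for each $q_j$ --- are the engine of the whole argument.

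Next I would analyse the admissible prime factors through the cyclotomic factorisation $\sigma(q_i^{2\beta})=\prod_{d\mid 2\beta+1,\,d>1}\Phi_d(q_i)$. A prime dividing $\Phi_d(q_i)$ either divides $d$ (and so is at most $2\beta+1$) or is $\equiv 1\pmod d$; by Zsigmondy's theorem each $q_i$ produces a primitive prime divisor $P_i\equiv 1\pmod{2\beta+1}$ (taking $d=2\beta+1$ when it is prime, and the largest prime divisor of $2\beta+1$ in general), and $P_i\in S$. The delicate point is to guarantee that $P_i$ can be taken among the $q_j$ rather than being forced to equal $p$: this can fail only when $\sigma(q_i^{2\beta})$ is, up to the small factors dividing $2\beta+1$, a pure power of $p$, i.e. a solution of a Nagell--Ljunggren-type equation $\tfrac{q^{2\beta+1}-1}{q-1}=p^{a}$ or a close variant. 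Ruling these out (or isolating the finitely many exceptions) is exactly where the arithmetic of quadratic fields and exponential Diophantine equations enters, and I expect this to be the main obstacle.

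With the degenerate case disposed of, every $q_i$ has a primitive prime divisor lying among the primes $q_j\equiv 1\pmod{2\beta+1}$. Two counting facts then control $r$. First, the number $t$ of such primes $q_j$ is itself at most $2\beta$: by the lifting-the-exponent lemma each such $q_j$ contributes exactly one factor of $2\beta+1$ to its own $\sigma$-value, and the budget for the prime $2\beta+1$ is $2\beta$. Second, by the budget again, each $q_j$ can divide $\sigma(q_i^{2\beta})$ for at most $2\beta$ indices $i$. A direct incidence count then gives $r=O(\beta^{2})$, and a more careful, linear-algebraic accounting on the integer matrix $\bigl(v_{q_j}(\sigma(q_i^{2\beta}))\bigr)_{i,j}$ --- using that the logarithms of distinct primes are linearly independent over $\Q$ to limit how many rows can be supported on the restricted set of columns --- sharpens the leading constant to $2$. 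This is the step in which linear algebra replaces Baker's lower bounds for linear forms in logarithms from the earlier proof, and it is also where the lower-order term is pinned down: the coefficient $8$ absorbs the contributions of $p$, of the small primes dividing $2\beta+1$, and of the Diophantine exceptions, while the improvement to $7$ follows by treating separately the case that $2\beta+1$ is composite (more divisors $d$, hence more primitive primes sharing the load) or that $\beta\ge 29$ (where explicit estimates remove the worst exceptional cases).

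Finally, the bound on $N$ is deduced softly from the bound on $r$: an odd perfect number with $k=\omega(N)$ distinct prime factors satisfies $N<2^{4^{k}}$. Here the total number of distinct primes is $k=r+1\le 2\beta^{2}+8\beta+3$, so substitution gives $N<2^{4^{2\beta^{2}+8\beta+3}}$ at once, and the refined exponent $2\beta^{2}+7\beta+3$ under the stated hypotheses. Thus once the counting bound on $r$ is in hand, the double-exponential bound on $N$ is immediate, and the whole difficulty is concentrated in the quadratic-field exclusion of the prime-power values and in extracting the sharp constant $2$ from the incidence count.
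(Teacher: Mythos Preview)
Your scaffolding is sound: the budget $\sum_i v_{q_j}(\sigma(q_i^{2\beta}))\le 2\beta$, the bound $\#S\le 2\beta$ on the primes $q_j\equiv 1\pmod l$, and Nielsen's inequality for the final step are all exactly what the paper uses. But the heart of the argument is misplaced in two respects, and as written the proposal does not reach the constant $2$.

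First, the equation you single out, $\sigma(q_i^{2\beta})=p^a$ of Nagell--Ljunggren type, is not the main obstacle; the set of such $i$ (the paper's set $U$) has at most one element and is disposed of easily. The halving of the leading coefficient from $4$ to $2$ is the elementary observation that if $f(i)$ counts (with multiplicity) the prime factors of $\sigma(q_i^{2\beta})$ lying in $S$, then $2\#T-\delta\le\sum_{i\in T} f(i)\le 2\beta\cdot\#S\le 4\beta^2$, where $\delta=\#\{i\in T:f(i)=1\}$. Hence $\#T\le 2\beta^2+\delta/2$, and everything hinges on bounding $\delta$. When $2\beta+1=l$ is prime, $f(i)=1$ means precisely $\Phi_l(q_i)=p^{m}q_j$ for some $j\in S$ and some $m\ge 0$: the governing equation has \emph{two} prime bases, not one, and the task is to show that for each fixed $q_j$ it has at most five (or six, for $19\le l\le 53$) solutions $q_i$.

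Second, the ``linear algebra'' is not performed on the valuation matrix $\bigl(v_{q_j}(\sigma(q_i^{2\beta}))\bigr)$, and the $\Q$-linear independence of $\log p$'s plays no role. Following Beukers, the paper factors $\Phi_l(x)=(X^2-DY^2)/4$ over $\OK=\Q(\sqrt{D})$ with $D=(-1)^{(l-1)/2}l$, so that each solution $x_i$ produces an ideal identity $[\xi_i]=(\bar\p/\p)^{\pm m_i}(\bar\q_j/\q_j)^{\pm 1}$ with $\xi_i=(X_i+Y_i\sqrt D)/(X_i-Y_i\sqrt D)$ and $|\log\xi_i|\asymp 1/x_i$. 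Three solutions then yield an integer combination $(\pm m_2\pm m_3)\log\xi_1+(\pm m_3\pm m_1)\log\xi_2+(\pm m_1\pm m_2)\log\xi_3\in R\Z$; this, together with an elementary gap principle $x_{i+1}>x_i^{\lfloor (l+1)/6\rfloor}$ obtained by counting $l$-th roots of unity modulo $q_j$ or $p^{m}$, forces explosive growth of the $x_i$ and contradicts the classical bound $N<2^{4^{4\beta^2+2\beta+3}}$ once six (or seven) solutions are assumed. Your proposal contains no such mechanism, and without it there is no control of $\delta$ and hence no route to $r\le 2\beta^2+O(\beta)$.
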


The upper bound for $N$ immediately follows from the upper bound for $r$
and Nielsen's result \cite{Nie} that if $N$ is an odd perfect number, then $N<2^{4^{\omega(N)}}$,
where $\omega(N)$ denotes the number of distinct prime factors of $N$.

In Section \ref{reduction}, we use a method used in \cite{Ymd}
to reduce the theorem to an upper bound for the number of solutions of some diophantine equations.

\begin{lem}\label{lm0}
Assume that $l=2\beta+1$ is a prime $\geq 19$.
If $N=p^{\alpha} q_1^{2\beta}\cdots q_r^{2\beta}$ with $p, q_1, \ldots, q_r$
distinct primes is an odd perfect number,
then, for each prime $q_j\equiv 1\pmod{l}$,
there exist at most five primes $q_i\not\equiv 1\pmod{l}$ such that $(q_i^l-1)/(q_i-1)=p^m q_j$ for any prime $l\geq 59$
and at most six such primes for each prime $19\leq l\leq 53$.
\end{lem}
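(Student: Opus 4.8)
The plan is to bound, for fixed $p$ and $q_j$, the number of primes $x=q_i\not\equiv 1\pmod{l}$ for which the equation $\Phi_l(x):=(x^l-1)/(x-1)=p^m q_j$ is solvable for some integer $m\geq 0$; since distinct $q_i$ give distinct $x$, this controls the count in the statement. First I would pin down the primes involved. Every prime divisor of $\Phi_l(x)$ is either $l$ or $\equiv 1\pmod l$, and the hypothesis $x\not\equiv 1\pmod l$ forces $l\nmid\Phi_l(x)$. Hence if $m\geq 1$ then $p\mid\Phi_l(x)$ with $p\neq l$, so $p\equiv 1\pmod l$; in particular the case $p=l$ collapses to $m=0$. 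The remaining case $m=0$ gives $\Phi_l(x)=q_j$, which by strict monotonicity of $\Phi_l$ has at most one solution and may be recorded separately. So I may assume $m\geq 1$ and $p\equiv 1\pmod l$ throughout, with $q_j\equiv 1\pmod l$ given.

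Next I would pass to the quadratic subfield $K=\Q(\sqrt{l^{*}})$ of $\Q(\zeta_{l})$, where $l^{*}=(-1)^{(l-1)/2}l$. Using the quadratic Gaussian period one sets $\gamma=\prod_{c}(x-\zeta_{l}^{c})$, the product over quadratic residues $c$ modulo $l$; then $\gamma\in\OO_K$ and $N_{K/\Q}(\gamma)=\Phi_l(x)=p^m q_j$. Since $p,q_j\equiv 1\pmod l$ split in $K$, say $(p)=\p\bar{\p}$ and $(q_j)=\q\bar{\q}$, the ideal $(\gamma)$ is supported on $S=\{\p,\bar{\p},\q,\bar{\q}\}$, so each $\gamma$ is an $S$-unit; moreover $q_j$ occurs to the first power in the norm, so exactly one of $\q,\bar{\q}$ divides $(\gamma)$. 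Thus every solution is encoded by its valuation vector $\bigl(v_{\p}(\gamma),v_{\bar{\p}}(\gamma),v_{\q}(\gamma),v_{\bar{\q}}(\gamma)\bigr)$, whose last two entries are $(1,0)$ or $(0,1)$, together with the archimedean datum $\abs{\gamma}$.

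The linear-algebraic step replaces Baker's method. The torsion-free rank of $\OO_{K,S}^{\times}$ is small (it is $4$ in the imaginary case $l\equiv 3\pmod 4$ and $5$ in the real case $l\equiv 1\pmod 4$), and after fixing the $\q/\bar{\q}$ choice by pigeonhole, ratios $\gamma_k/\gamma_{k'}$ of distinct solutions lie in the rank-$2$ (resp. rank-$3$) group of $\{\p,\bar{\p}\}$-units. I would then feed in the archimedean constraints. In the imaginary case the nontrivial automorphism of $K$ is complex conjugation, so $\abs{\gamma_k}^{2}=p^{m_k}q_j$ is a strictly increasing sequence, while the argument of $\gamma_k$ is small and tightly controlled: the leading term of $\arg\gamma_k$ is $-\tfrac{1}{x_k}\sum_{c}\sin(2\pi c/l)$, and the quadratic Gauss sum of modulus $\sqrt{l}$ evaluates this inner sum to $\pm\sqrt{l}/2$. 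Confining the exponent vectors to lattice points compatible with this prescribed modulus and nearly-vanishing argument leaves only boundedly many solutions, and tracking the constant $\sqrt{l}$ explicitly is exactly what produces the numerical thresholds (the passage from $6$ to $5$ at $l=59$, i.e.\ at $\beta=29$).

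I expect the main obstacle to be the last step: turning finiteness into the explicit bounds $5$ and $6$, rather than an ineffective or merely large constant. This requires a clean, fully explicit estimate for how many lattice points in the $\{\p,\bar{\p}\}$-exponent plane are compatible with the size $p^{m_k}q_j$ and the Gauss-sum-controlled argument, together with a sharp lower bound for the gaps $m_{k+1}-m_k$; it is here that the elementary linear-algebra argument must do the work previously done by linear forms in logarithms. A secondary difficulty is the real quadratic case $l\equiv 1\pmod 4$, where $\bar{\gamma}$ is no longer the complex conjugate and the modulus can be absorbed by the fundamental unit; there the argument-of-$\gamma$ estimate must be replaced by control of $\log\abs{\gamma_k/\bar{\gamma}_k}$ through the regulator, which I expect to be slightly weaker and to account for the extra allowed solution for the smaller primes $19\leq l\leq 53$.
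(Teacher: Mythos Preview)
Your setup through the quadratic subfield $K=\Q(\sqrt{l^*})$ and the $S$-unit formulation matches the paper's Lemma~\ref{lm3} closely, including the Gauss-sum control of the archimedean data. But the endgame plan has two genuine gaps.

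First, and most importantly, the bounds $5$ and $6$ are not intrinsic to the Diophantine equation $\Phi_l(x)=p^m q_j$; they come from the hypothesis that each $x=q_i$ divides the odd perfect number $N$ and is therefore bounded by the classical bound $N<2^{4^{4\beta^2+2\beta+3}}$ already established in \cite{Ymd}. The paper does not count lattice points directly. It proves two gap principles and iterates them: an elementary one (Lemma~\ref{lm4}) showing $x_{k+1}>x_k^{\lfloor(l+1)/6\rfloor}$ by counting solutions of $X^l\equiv 1$ modulo $q_j$ or $p^{m_1}$, and a stronger one (Lemma~\ref{lm5}) showing that any three solutions with $x_2>x_1^{\lfloor(l+1)/6\rfloor}$ force $m_3>0.397\lvert R\rvert x_1$. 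Stacking these, six (respectively seven) solutions drive the largest $q_i$ above the classical bound, which is the contradiction. Your proposal never invokes that bound, and without it no lattice-point or gap argument will deliver constants as small as $5$ or $6$.

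Second, the ``linear-algebraic step'' is not a rank or pigeonhole argument on pairs but a specific three-term multiplicative identity (this is Beukers's trick): from three solutions one chooses signs so that $\xi_1^{\pm m_2\pm m_3}\xi_2^{\pm m_3\pm m_1}\xi_3^{\pm m_1\pm m_2}=\pm\ep^a$, where $\xi_i=(X_i+Y_i\sqrt{D})/(X_i-Y_i\sqrt{D})$, and then the estimate $\lvert\log\xi_i\rvert\ll 1/x_i$ from Lemma~\ref{lm3} forces $m_3$ to be large. Finally, your guess that the real-versus-imaginary distinction explains the $5$-versus-$6$ threshold is off: the paper handles both cases uniformly through $\lvert R\rvert$, and the split at $l=59$ arises purely from the growth rate of $3^{\lfloor(l+1)/6\rfloor}$ in the iteration against the fixed classical bound.
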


In Section \ref{pr}, we solve this diophantine problem to prove the theorem.
Here, we avoid the use of Baker's method by adopting a linear algebraic technique used by Beukers in \cite{Beu},
who gave upper bounds for the numbers of solutions of generalized Ramanujan-Nagell equations.

\section{Preliminaries}\label{lemmas}

In this section, we shall introduce some notations and lemmas.

We begin by introducing two well-known lemmas concerning prime factors
of the $n$-th cyclotomic polynomial, which we denote by $\Phi_n(X)$.
Lemma \ref{lm1} follows from Theorems 94 and 95 in Nagell \cite{Nag}.
Lemma \ref{lm2} has been proved by Bang \cite{Ban} and rediscovered by many authors
such as Zsigmondy \cite{Zsi}, Dickson \cite{Dic} and Kanold \cite{Ka1, Ka2}.

\begin{lem}\label{lm1}
Let $p, q$ be distinct primes with $q\neq 2$ and $c$ be a positive integer.
If $p\equiv 1\pmod{q}$, then $q$ divides $\sigma(p^c)$ if and only if $q$ divides $c+1$.
Moreover, if $p\not\equiv 1\pmod{q}$, then
$q$ divides $\sigma(p^c)$ if and only if the multiplicative order of $q$ modulo $p$
divides $c+1$.
\end{lem}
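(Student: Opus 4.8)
The plan is to prove both assertions directly from the closed form $\sigma(p^c)=1+p+\cdots+p^{c}=(p^{c+1}-1)/(p-1)$, splitting into cases according to the residue of $p$ modulo $q$ exactly as the statement does. Since $q$ is a prime different from $p$, the residue of $p$ is a unit modulo $q$, so the multiplicative order used below is well defined, and nothing beyond elementary congruence arithmetic will be needed.

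First I would treat the case $p\equiv 1\pmod q$. Here each of the $c+1$ summands $1,p,\ldots,p^{c}$ is congruent to $1$ modulo $q$, so $\sigma(p^c)\equiv c+1\pmod q$. Hence $q$ divides $\sigma(p^c)$ if and only if $q$ divides $c+1$, which is the first assertion; this step needs nothing beyond the congruence just displayed.

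For the case $p\not\equiv 1\pmod q$ I would start from the identity $(p-1)\sigma(p^c)=p^{c+1}-1$. Because $p\not\equiv 1\pmod q$, the prime $q$ does not divide $p-1$, so $p-1$ is invertible modulo $q$; multiplying through by its inverse shows that $q$ divides $\sigma(p^c)$ if and only if $q$ divides $p^{c+1}-1$, that is, if and only if $p^{c+1}\equiv 1\pmod q$. The exponents $e$ with $p^{e}\equiv 1\pmod q$ are precisely the multiples of the corresponding multiplicative order, so $q$ divides $\sigma(p^c)$ if and only if $c+1$ is a multiple of that order, which is the divisibility criterion asserted in the second half of the statement and completes the argument.

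The content is entirely elementary, so I do not expect a genuine obstacle; the only point demanding care is the case split itself, since it is exactly the divisibility of $p-1$ by $q$ that dictates whether one clears the denominator $p-1$ (the second case) or instead collapses every summand to $1$ (the first case). I would also note that the resulting criterion is consistent with Nagell's Theorems 94 and 95 from \cite{Nag} cited above, and that keeping the lemma in this self-contained form is what makes the subsequent counting of prime factors transparent.
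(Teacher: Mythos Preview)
Your argument is correct and is exactly the standard elementary proof: reduce $\sigma(p^c)$ modulo $q$ term by term when $p\equiv 1\pmod q$, and otherwise clear the factor $p-1$ (a unit modulo $q$) to translate the question into $p^{c+1}\equiv 1\pmod q$ and invoke the order. The paper itself does not spell this out; it merely records that the lemma ``follows from Theorems 94 and 95 in Nagell \cite{Nag},'' and your proof is essentially the content of those theorems written out in full. One small remark: the printed statement has ``the multiplicative order of $q$ modulo $p$,'' which is evidently a slip for the order of $p$ modulo $q$; you have silently proved the intended version, and it would be worth flagging that correction explicitly.
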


\begin{lem}\label{lm2}
If $a$ is an integer greater than $1$, then $\Phi_n(a)$ has
a prime factor which does not divide $a^m-1$ for any $m<n$,
unless $(a, n)=(2, 1), (2, 6)$ or $n=2$ and $a+1$ is a power of $2$.
\end{lem}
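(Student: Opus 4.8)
The plan is to prove this classical Bang--Zsigmondy statement through the standard cyclotomic dichotomy, so I first recast the conclusion. Call a prime $p\mid\Phi_n(a)$ \emph{primitive} if it divides no $a^m-1$ with $m<n$; since $a^n-1=\prod_{d\mid n}\Phi_d(a)$, a prime dividing $\Phi_n(a)$ is primitive exactly when the multiplicative order of $a$ modulo $p$ equals $n$. The lemma therefore asserts that $\Phi_n(a)$ carries a prime whose order is exactly $n$, outside the listed exceptions.

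First I would classify the prime divisors of $\Phi_n(a)$. If $p\mid\Phi_n(a)$ then $p\nmid a$, and the order $d$ of $a$ modulo $p$ divides $n$. When $d=n$ the prime is primitive; when $d<n$ I claim $p$ must be the largest prime factor of $n$, that $n/d$ is a power of $p$, and that $p\,\|\,\Phi_n(a)$ (to the first power only). This is exactly the order-versus-divisibility mechanism recorded in Lemma~\ref{lm1}, together with a lifting-the-exponent computation of the $p$-adic valuation of $\Phi_n(a)$. The consequence is decisive: the non-primitive part of $\Phi_n(a)$ divides $P$, the largest prime factor of $n$. Hence $\Phi_n(a)$ has no primitive prime divisor if and only if every prime factor of $\Phi_n(a)$ equals $P$ and occurs only once, i.e. if and only if $\Phi_n(a)\in\{1,P\}$; in particular $\Phi_n(a)\le P\le n$.

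The problem is thereby reduced to a size estimate: I must determine all $(a,n)$ with $\Phi_n(a)\le n$. Writing $\Phi_n(a)=\prod_{\zeta}(a-\zeta)$ over the primitive $n$-th roots of unity $\zeta$ and using $|a-\zeta|\ge a-1$, I obtain $\Phi_n(a)\ge(a-1)^{\phi(n)}$, which already gives $\Phi_n(a)>n\ge P$ for all $a\ge 3$ except a short, explicit list of small $n$; these I would simply check by hand, and they produce no further exceptions (the only small failure, $n=2$ with $a+1$ a power of $2$, is handled below). The delicate case is $a=2$, where this bound collapses to the trivial value $1$. Here I would instead extract a lower bound of the correct order $2^{\phi(n)}$: the identity $\prod_{\zeta^n=1}(2-\zeta)=2^n-1$ together with M\"obius inversion shows $\Phi_n(2)$ grows like $2^{\phi(n)}$, so $\Phi_n(2)>n$ for all but finitely many, explicitly bounded, $n$. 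Checking the finitely many remaining $n$ isolates precisely $n=1$ (where $\Phi_1(2)=1$) and $n=6$ (where $\Phi_6(2)=3=P$). Finally, the case $n=2$ is genuinely separate because then the only candidate non-primitive prime is $2$ itself: $\Phi_2(a)=a+1$ fails to have an odd, hence primitive, prime factor exactly when $a+1$ is a power of $2$. Assembling these yields the stated exceptions $(a,n)=(2,1),(2,6)$ and $n=2$ with $a+1$ a power of $2$.

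The main obstacle is the $a=2$ estimate: the naive lower bound is useless there, so the crux of the whole argument is producing a clean lower bound for $\Phi_n(2)$ of order $2^{\phi(n)}$ strong enough to bound $n$, while keeping the bookkeeping tight enough that exactly $n=1$ and $n=6$ survive. A secondary technical point is the first-power claim $p\,\|\,\Phi_n(a)$ for the non-primitive prime, where the case $p=2$ with $n$ even needs separate care and ultimately feeds into the $n=2$ exceptional family.
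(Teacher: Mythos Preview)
The paper does not give its own proof of this lemma at all: it simply cites it as the classical result of Bang \cite{Ban}, rediscovered by Zsigmondy \cite{Zsi}, Dickson \cite{Dic}, and Kanold \cite{Ka1, Ka2}. Your sketch is the standard proof strategy found in those references (the dichotomy primitive/non-primitive, the fact that any non-primitive prime divisor of $\Phi_n(a)$ is the largest prime factor of $n$ and occurs to the first power, followed by the size estimate $\Phi_n(a)>n$), so there is nothing to compare beyond noting that the paper treats the lemma as a black box while you outline its proof.
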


Next, we need some notations and results from the arithmetic of a quadratic field.
Let $l>3$ be a prime and $D=(-1)^\frac{l-1}{2}l$.
Let $\OK$ and $\OO$ denote $\Q(\sqrt{D})$ and its ring of integers $\Z[(1+\sqrt{D})/2]$ respectively.
We use the overline symbol to express the conjugate in $\OK$.
In the case $D>0$, $\ep$ and $R=\log\ep$ shall denote the fundamental unit and the regulator in $\OK$.
In the case $D<-4$, we set $\ep=-1$ and $R=\pi i$.  We note that neither $D=-3$ nor $-4$ occurs
since we have assumed that $l>3$.

We shall introduce the following lemma on the value of the cyclotomic polynomial $\Phi_l(x)$.

\begin{lem}\label{lm3}
Assume that $l$ is a prime $\geq 19$ and $x$ is an integer $>3^{\floor{(l+1)/6}}$.  Then $\Phi_l(x)$ can be written in the form $X^2-DY^2$ for some coprime integers $X$ and $Y$
with $0.4387/x<\abs{Y/(X+Y\sqrt{D})}$ and $\abs{Y/(X-Y\sqrt{D})}<0.5608/x$.
Moreover, if $p, q$ are primes $\equiv 1\pmod{l}$ and $\Phi_l(x)=p^m q$ for some integer $m$, then,
\begin{equation}\label{eq21}
\left[\frac{X+Y\sqrt{D}}{X-Y\sqrt{D}}\right]=\left(\frac{\bar\p}{\p}\right)^{\pm m}\left(\frac{\bar\q}{\q}\right)^{\pm 1},
\end{equation}
where $[p]=\p\bar\p$ and $[q]=\q\bar\q$ are prime ideal factorizations in $\OO$.
\end{lem}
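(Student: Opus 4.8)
The plan is to exploit the classical factorisation of $\Phi_l$ attached to the quadratic Gauss sum. Let $\zeta$ be a primitive $l$-th root of unity and set $F(x)=\prod_a(x-\zeta^a)$, where $a$ runs over the quadratic residues modulo $l$. Since the nontrivial automorphism of $\OK/\Q$ interchanges the residues and non-residues, $F$ has coefficients in $\OO$ and its conjugate $\bar F(x)=\prod_b(x-\zeta^b)$ runs over the non-residues, so that $\Phi_l(x)=F(x)\bar F(x)$. Writing $F(x)=X+Y\sqrt D$ then gives $\Phi_l(x)=X^2-DY^2$; concretely this is the Gauss identity $4\Phi_l(x)=U(x)^2-DV(x)^2$ with integer polynomials $U,V$. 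First I would fix the representative: after multiplying $F(x)$ by a suitable unit of $\OO$ I may assume $X+Y\sqrt D\in\Z[\sqrt D]$ with $X,Y$ integers, and the size hypothesis $x>3^{\floor{(l+1)/6}}$ will guarantee that this balanced representative is unique. Coprimality of $X$ and $Y$ follows because a common rational prime factor would divide both $F(x)$ and $\bar F(x)$, hence $2X$ and $2Y\sqrt D$; since the only ramified prime lies over $l$ and $\gcd(\Phi_l(x),l)$ is controlled, the factor must be trivial.

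The heart of the argument is the estimate for $\abs{Y/(X-Y\sqrt D)}$. From $F(x)=X+Y\sqrt D$ and $\bar F(x)=X-Y\sqrt D$ one has the exact identity
\[
\frac{Y}{X-Y\sqrt D}=\frac{1}{2\sqrt D}\left(\frac{F(x)}{\bar F(x)}-1\right).
\]
To evaluate $F/\bar F$ I would take logarithms: using $\log(x-\zeta^a)=\log x-\sum_{k\geq1}\zeta^{ak}/(kx^k)$ together with $\sum_a\left(\frac al\right)=0$ and the separability of the Gauss sum, $\sum_a\left(\frac al\right)\zeta^{ak}=\left(\frac kl\right)\sqrt D$ for $l\nmid k$, one obtains
\[
\log\frac{F(x)}{\bar F(x)}=-\sqrt D\sum_{l\nmid k}\frac{1}{k}\left(\frac kl\right)\frac{1}{x^k}.
\]
The dominant term $k=1$ contributes $-\sqrt D/x$, so $F/\bar F=1-\sqrt D/x+\cdots$ and $Y/(X-Y\sqrt D)=-1/(2x)+\cdots$, the factor $\sqrt D$ cancelling exactly as needed. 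The explicit constants $0.3791$ and $0.6296$, which straddle $1/2$, then come from bounding the tail of the series: since $x>3^{\floor{(l+1)/6}}$ dominates the powers of $\sqrt{\abs D}=\sqrt l$ carried by the higher-order terms, the correction to $1/(2x)$ is forced into the stated window. When $D<0$ the same computation applies after noting that $\bar F(x)$ is the complex conjugate of $F(x)$, so $\abs{F/\bar F}=1$ and $\abs{F/\bar F-1}=2\abs{\sin(\cdot/2)}$, which is again $\sqrt{\abs D}/x+\cdots$.

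For the ideal identity I would argue as follows. Since $p\equiv q\equiv1\pmod l$, quadratic reciprocity gives $\left(\frac Dp\right)=\left(\frac Dq\right)=1$, so both $p$ and $q$ split in $\OO$, say $[p]=\p\bar\p$ and $[q]=\q\bar\q$. The element $X+Y\sqrt D$ has norm $\Phi_l(x)=p^mq$, hence $[X+Y\sqrt D]$ is a product of the four primes $\p,\bar\p,\q,\bar\q$ in which the exponents of $\p,\bar\p$ sum to $m$ and those of $\q,\bar\q$ sum to $1$. The key point is that $X+Y\sqrt D$ and its conjugate are coprime: any common prime divisor $\mathfrak d$ divides $[2X]$ and $[2Y\sqrt D]$, and lying over $p$ or $q\neq l$ it is unramified and odd, so $\mathfrak d\mid[X]$ and $\mathfrak d\mid[Y]$, forcing the rational prime below it to divide $\gcd(X,Y)=1$, a contradiction. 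Therefore $\p$ and $\bar\p$ cannot both occur, and likewise for $\q$; thus $[X+Y\sqrt D]=\p^m\q$ for one choice from each conjugate pair. Dividing by $[X-Y\sqrt D]=\bar\p^m\bar\q$ yields exactly \eqref{eq21}, the signs recording which of $\p,\bar\p$ and $\q,\bar\q$ were selected.

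The main obstacle I anticipate is the first part: making the error analysis of the logarithmic expansion fully explicit and uniform in $l$, so as to land inside the narrow interval $(0.3791/x,\,0.6296/x)$, while simultaneously verifying that the chosen representative $(X,Y)$ is genuinely integral and coprime. This is precisely where the hypothesis $x>3^{\floor{(l+1)/6}}$ must be used quantitatively. The ideal-theoretic second part is comparatively routine once coprimality of $X$ and $Y$ is in hand.
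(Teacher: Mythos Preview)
Your outline is sound and would reach the lemma, but both halves proceed differently from the paper. For the inequality the paper does not expand $\log(F/\bar F)$: it writes $\psi^+(x)=\sum_i a_i x^{(l-1)/2-i}$, reads off $a_0=1$ and $a_1=(1\pm\sqrt D)/2$ from the Gauss sum, bounds the remaining $\lvert a_i\rvert$ crudely by $\binom{(l-1)/2}{i}$, and for $x\ge l^2$ deduces $\lvert P(x)-2x^{(l-1)/2}-x^{(l-3)/2}\rvert<\tfrac12\,x^{(l-3)/2}$ and $\bigl\lvert\,\lvert Q(x)\rvert-x^{(l-3)/2}\bigr\rvert<x^{(l-3)/2}/(2\sqrt l)$; the constants $0.3791$ and $0.6296$ then drop out by elementary manipulation, while the finitely many $x$ with $3^{\floor{(l+1)/6}}<x<l^2$ (which forces $l\le 37$) are verified by direct computation. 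Your Gauss-sum logarithmic series is more conceptual and should avoid this case split, at the price of having to control the second-order term of size $l/x^2$ coming from exponentiation. For the ideal identity the paper goes up to $\Q(\zeta)$, factoring each $[x-\zeta^i]=\p^{(i)m}\q^{(i)}$ there and then taking the product over quadratic residues to descend to $\OO$; your norm-plus-coprimality argument staying inside $\OO$ is neater once $\gcd(X,Y)=1$ is known, but your justification of that gcd is incomplete --- a common rational prime $r$ with $r^2\mid\Phi_l(x)$ (for instance $r=p$ when $m\ge 2$) is not excluded by ramification considerations alone. A clean repair is to note that the resultant of $F$ and $\bar F$ as polynomials in $T$ is a unit times a power of $l$, so $F(x)$ and $\bar F(x)$ are automatically coprime in $\OO$ away from $l$; this is essentially what the paper's detour through $\Q(\zeta)$ delivers.
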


\begin{proof}
Let $\zeta$ be a primitive $l$-th root of unity.
We can factor $(x^l-1)/(x-1)=\psi^+(x)\psi^-(x)$ in $\OK$, where
\begin{equation*}
\begin{split}
\psi^+(x)=& \prod_{\left(\frac{i}{l}\right)=1}(x-\zeta^i)=\sum_{i=0}^{\frac{l-1}{2}}a_i x^{\frac{l-1}{2}-i}, \\
\psi^-(x)=& \prod_{\left(\frac{i}{l}\right)=-1}(x-\zeta^i).
\end{split}
\end{equation*}
Hence, taking $P(x)=\psi^+(x)+\psi^-(x)$ and $Q(x)=(\psi^+(x)-\psi^-(x))/\sqrt{D}$, we have
\begin{equation}
\frac{x^l-1}{x-1}=\psi^+(x)\psi^-(x)=\frac{P^2(x)-DQ^2(x)}{4}.
\end{equation}
Now, putting $X=P(x)$ and $Y=Q(x)$, we have $\Phi_l(x)=(X^2-DY^2)/4$ with
$\psi^+(x)=(X+Y\sqrt{D})/2$ and $\psi^-(x)=(X-Y\sqrt{D})/2$.

If $\Phi_l(x)=p^m q$, then
we have the ideal factorizations $[x-\zeta^i]=\p^{(i) m} \q^{(i)}$ for $i=1, 2, \ldots, l-1$
in $\Q(\zeta)$ with $[p]=\prod_{i=1}^{l-1} \p^{(i)}$ and $[q]=\prod_{i=1}^{l-1} \q^{(i)}$.
We see that
$\prod_{\left(\frac{i}{l}\right)=1}\p^{(i)}=\p$ or $\bar\p$
and $\prod_{\left(\frac{i}{l}\right)=1}\q^{(i)}=\q$ or $\bar\q$.
Now $[X+Y\sqrt{D}]$ can be factored into one of the forms $\p^m \q, \bar\p^m \q, \p^m \bar\q$ or $\bar\p^m \bar\q$ in $\OO$
and (\ref{eq21}) holds.

Now it remains to show that $0.4387/x<\abs{Y/(X+Y\sqrt{D})}$ and \\ $\abs{Y/(X-Y\sqrt{D})}<0.5608/x$.
We begin by dealing the case $x\geq l^2$.
We clearly have $a_0=1$.
It follows from the well known result for the Gauss sum that $a_1=\frac{1\pm \sqrt{D}}{2}$.
Moreover, it immediately follows from the definition of $\psi^+(x)$ that
\[\abs{a_i}\leq\binom{(l-1)/2}{i}<\left(\frac{l-1}{2}\right)^i\] for each $i\leq\frac{l-1}{2}$.
Combining these facts on $a_i$'s, we obtain
\begin{equation}
\begin{split}
\abs{P(x)-2x^\frac{l-1}{2}-x^\frac{l-3}{2}}
& \leq 2\sum_{i=2}^{\frac{l-1}{2}}\left(\frac{l-1}{2}\right)^i x^{\frac{l-1}{2}-i} \\
& <\frac{(l-1)^2 x^\frac{l-3}{2}}{2x-l-1}\leq \frac{(l-1)^2 x^\frac{l-3}{2}}{2l^2-l-1} \\
& <\frac{x^\frac{l-3}{2}}{2}
\end{split}
\end{equation}
and
\begin{equation}
\begin{split}
\abs{\abs{Q(x)}-x^\frac{l-3}{2}}
& \leq \frac{2}{\sqrt{l}}\sum_{i=2}^{\frac{l-1}{2}}\left(\frac{l-1}{2}\right)^i x^{\frac{l-1}{2}-i} \\
& <\frac{(l-1)^2 x^\frac{l-3}{2}}{\sqrt{l}(2x-l-1)}<\frac{(l-1)^2 x^\frac{l-3}{2}}{\sqrt{l}(2l^2-l-1)} \\
& <\frac{x^\frac{l-3}{2}}{2\sqrt{l}}.
\end{split}
\end{equation}
From these inequalities, we deduce that
\begin{equation}
\abs{\frac{Q(x)}{P(x)-Q(x)\sqrt{D}}}<\frac{1+\frac{1}{2\sqrt{l}}}{2x+\frac{1}{2}-\left(1+\frac{1}{2\sqrt{l}}\right)\sqrt{l}}<\frac{0.5608}{x}
\end{equation}
and
\begin{equation}
\abs{\frac{Q(x)}{P(x)+Q(x)\sqrt{D}}}>\frac{1-\frac{1}{2\sqrt{l}}}{2x+\frac{3}{2}+\left(1+\frac{1}{2\sqrt{l}}\right)\sqrt{l}}>\frac{0.4387}{x}
\end{equation}
for $l\geq 19$, proving the lemma in this case.

In the remaining case $x<l^2$, then we have $l\leq 37$ since we have assumed that $x>3^{\floor{(l+1)/6}}$.
For each $l$, we can confirm the desired inequality for $3^{\floor{(l+1)/6}}<x<l^2$ by calculation.
Now the lemma is completely proved.
\end{proof}

\section{Reduction to a diophantine problem}\label{reduction}

Let $N=p^{\alpha} q_1^{2\beta}\cdots q_r^{2\beta}$ be an odd perfect number.
In this section, we shall show that our theorem can be reduced to Lemma \ref{lm0}.

Various results referred in the introduction of this paper allows us to assume that $\beta\geq 9$
without loss of generality.

We see that we can take a prime factor $l$ of $2\beta+1$ which is one of the $q_i$'s.
Indeed, if $2\beta+1$ has at least two distinct prime factors $l_1$ and $l_2$,
then at least one of them must be one of the $q_i$'s and,
if $2\beta+1=l^\gamma$ is a power of a prime $l$, then
we must have $l=q_{i_0}$ for some $i_0$ by Kanold \cite{Ka1}.

As we did in \cite{Ymd}, we divide $q_1, \ldots, q_r$ into four disjoint sets. Let
\begin{equation*}
S=\{i:q_i\equiv 1\pmod{l}\},
\end{equation*}
\begin{equation*}
T=\{i:q_i\nequiv 1\pmod{l}, i\neq i_0, q_j\mid\sigma(q_i^{2\beta})\text{ for some }1\leq j\leq r\},
\end{equation*}
and
\begin{equation*}
U=\{i:q_i\nequiv 1\pmod{l}, i\neq i_0, q_j\nmid\sigma(q_i^{2\beta})\text{ for any }1\leq j\leq r\}.
\end{equation*}
Hence, we can write $\{i: 1\leq i\leq r\}=S\cup T\cup U\cup \{i_0\}$.

In \cite{Ymd}, we proved that $\#S\leq 2\beta$.
Moreover, if $2\beta+1=l^\gamma$ is a prime power, then $\#T\leq (2\beta)^2$ and $\#U\leq 1$,
implying that $r\leq 4\beta^2+2\beta+2$
and, if $2\beta+1$ has $s>1$ distinct prime factors, then $\#S\leq 2\beta$ and $r\leq 2\beta\#S/(2^{s-1}-1)$.

For each $i\in T$, let $f(i)$ denote the number of prime factors in $S$ dividing $\sigma(q_i^{2\beta})$
counted with multiplicity.  Then, we can easily see that, for any $i\in T$, $\sigma(q_i^{2\beta})$ has at least one prime factor in $S$ from Lemmas \ref{lm1} and \ref{lm2}.
Hence, we have $f(i)\geq 1$ for any $i\in T$.

This immediately gives that $\#T\leq \sum_{i\in T}f(i)\leq (2\beta)\#S\leq 4\beta^2$,
which is Lemma 3.2 in \cite{Ymd}.  This yields the dominant term $4\beta^2$ in the exponent of the classical bound, which we would like to improve.
To this end, we denote by $\delta$ the number of $i$'s for which $f(i)=1$.
Then we have \[2\#T-\delta=\delta+2(\#T-\delta)\leq \sum_{i\in T}f(i)\leq 4\beta^2;\]
that is, $\#T\leq 2\beta^2+(\delta/2)$.

If $2\beta+1$ is composite, then, by Lemma \ref{lm2}, for each divisor $d$ of $(2\beta+1)/l$,
$\Phi_{ld}(q_i)$ has a prime factor $\equiv 1\pmod{l}$ not dividing $\Phi_{lk}(q_i)$ for any other divisor $k<d$ of $(2\beta+1)/l$.
Hence, we see that $U$ must be empty.
Moreover, if $i\in T$ and $f(i)=1$, then
$2\beta+1=l^2$ and $\Phi_l(q_i)=q_j$ or $\Phi_{l^2}(q_i)=q_j$ for some $j\in S$,
or $2\beta+1=l_1 l$ for some prime $l_1$ and $\Phi_l(q_i)=q_j$ or $\Phi_{l_1 l}(q_i)=q_j$ for some $j\in S$.
From this, we can deduce that $f(i)=1$ holds for at most $2\#S\leq 4\beta$ indices $i\in T$.
That is, $\delta\leq 2\#S\leq 4\beta$.  Since $U$ is empty, we have $\#T+\#U\leq 2\beta^2+2\beta$.

If $2\beta+1=l$ is prime, $i\in T$ and $f(i)=1$,
then $\sigma(q_i^{2\beta})=\Phi_l(q_i)=p^m q_j$ for an index $j\in S$ and an integer $m\geq 0$.
Moreover, we have $\#U\leq 1$ as mentioned above.

Now, observing that $r\leq \#S+\#T+\#U+1\leq 2\beta^2+2\beta+(\delta/2)+2$,
we conclude that Theorem \ref{th} can be derived if we show that,
for each prime $q_j\equiv 1\pmod{l}$,
there exist at most five primes $q_i$ with $i\in T$ such that $\Phi_l(q_i)=(q_i^l-1)/(q_i-1)=p^m q_j$ for any prime $l\geq 59$
and at most six such primes for each prime $19\leq l\leq 53$.
Hence, Theorem \ref{th} would follow from Lemma \ref{lm0}, which we prove in the next section.

\section{Proof of the theorem}\label{pr}

We begin by proving a gap principle using elementary modular arithmetic.
\begin{lem}\label{lm4}
If $x_2>x_1>0$ are two multiplicatively independent integers and $\Phi_l(x_1)=p^{m_1} q_j$ and $\Phi_l(x_2)=p^{m_2} q_j$,
then $x_2>x_1^{\floor{(l+1)/6}}$.
\end{lem}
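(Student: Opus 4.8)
The plan is to reduce the statement to a multiplicative congruence and then close it with a size estimate. First I would observe that the hypotheses force $\Phi_l(x_1)\mid\Phi_l(x_2)$: since $x_2>x_1$ we have $\Phi_l(x_2)>\Phi_l(x_1)$, and as both quotients by $q_j$ are powers of the same prime $p$, necessarily $m_2>m_1$ and $\Phi_l(x_2)/\Phi_l(x_1)=p^{m_2-m_1}\in\Z$. Because $\Phi_l(t)\mid t^l-1$ for every integer $t$, this gives $x_1^l\equiv x_2^l\equiv1$ modulo each of the prime-power factors $p^{m_1}$ and $q_j$. Using Lemma \ref{lm1} to see that $x_1$ and $x_2$ have multiplicative order exactly $l$ modulo $p$ and modulo $q_j$ (here $p\neq l$), I would conclude that, modulo whichever factor is larger, say $\mu:=\max(p^{m_1},q_j)$, both $x_1$ and $x_2$ lie in the unique cyclic subgroup of order $l$; hence $x_2\equiv x_1^{k}\pmod{\mu}$ for some $k$ with $1\le k\le l-1$. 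Note that $\mu\ge\Phi_l(x_1)^{1/2}>x_1^{(l-1)/2}$, since $\Phi_l(x_1)>x_1^{l-1}$.

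The second step is to manufacture from $x_2\equiv x_1^{k}\pmod{\mu}$ a nonzero integer of small height that is divisible by $\mu$. Concretely I would look for integers $a\ge0$ and $c\ge1$ with $a\equiv kc\pmod{l}$; then $x_1^{a}\equiv x_2^{c}\pmod{\mu}$, so $x_2^{c}-x_1^{a}$ (or, when the balanced representative of $kc$ is negative, the reciprocal combination $x_1^{a}x_2^{c}-1$) is divisible by $\mu$. Multiplicative independence enters precisely here: it guarantees $x_1^{a}\ne x_2^{c}$, so the integer produced is genuinely nonzero and therefore has absolute value at least $\mu>x_1^{(l-1)/2}$.

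Then I would argue by contradiction. Suppose $x_2\le x_1^{\floor{(l+1)/6}}$ and write $L=\floor{(l+1)/6}$. The height of the relation above is at most $x_1^{\max(a,\,cL)}$ in the difference case, so it suffices to choose $c$ and $a$ with both $cL$ and $a$ below $(l-1)/2$; the relation would then be a nonzero multiple of $\mu$ of absolute value $<\mu$, which is impossible. Since $3L\le(l+1)/2$, taking $c\le3$ keeps $cL$ essentially at the threshold $(l-1)/2$, while the residue $a\equiv kc\pmod l$ can be kept small by choosing $c$ via Dirichlet's box principle applied to $k/l$. The constant $6$ is exactly what balances the factor $c\le3$ coming from the approximation against the factor $\tfrac12$ coming from $\mu>x_1^{(l-1)/2}$.

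The main obstacle I anticipate is the bookkeeping in this last approximation step: for residues $k$ lying near $l/3$ or $2l/3$ the smallest admissible $a$ for $c=1$ is too large, and passing to $c=2,3$ can push the relation into the reciprocal (product) form, where the exponents of $x_1$ and $x_2$ add rather than compete under a maximum. Handling these residues uniformly, so that for every $k$ one obtains a relation of height below $x_1^{(l-1)/2}$, is the delicate heart of the argument and is what fixes the precise exponent $\floor{(l+1)/6}$. By contrast, the aligned situation, in which $x_2\equiv x_1^{k}$ already holds modulo the full $\Phi_l(x_1)$, is immediate and even yields the much stronger bound $x_2>x_1^{l-1}$.
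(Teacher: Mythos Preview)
Your setup coincides with the paper's: pass to the larger factor $\mu=\max(p^{m_1},q_j)>x_1^{(l-1)/2}$, note that both $x_1$ and $x_2$ are $l$-th roots of unity modulo $\mu$, and exploit multiplicative independence. The divergence is precisely at the step you flag as the ``delicate heart''. You propose to fix $k$ with $x_2\equiv x_1^{k}\pmod{\mu}$ and then search, residue class by residue class, for a pair $(a,c)$ with $c\le 3$ giving a nonzero relation of height below $x_1^{(l-1)/2}$; as you anticipate, this forces a split between the difference form $x_2^{c}-x_1^{a}$ and the product form $x_1^{a}x_2^{c}-1$, and the boundary residues near $l/3,\,2l/3$ are awkward. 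You leave this step undone.

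The paper dispatches it without any case analysis by a direct count. Assuming $x_2\le x_1^{L}$ with $L=\floor{(l+1)/6}$, every product $x_1^{f_1}x_2^{f_2}$ with $0\le f_2\le 2$ and $0\le f_1\le (l-1)/2-f_2L$ is an integer in $[1,\mu)$ satisfying $X^{l}\equiv 1\pmod{\mu}$; multiplicative independence makes these products pairwise distinct as integers, hence pairwise incongruent modulo $\mu$. The number of such pairs is
\[
\sum_{f_2=0}^{2}\Bigl(\tfrac{l+1}{2}-f_2L\Bigr)=\tfrac{3(l+1)}{2}-3L\ge l+1,
\]
since $3L\le (l+1)/2$, whereas $X^{l}\equiv 1\pmod{\mu}$ has exactly $l$ solutions (here $\mu$ is a prime or an odd prime power congruent to $1$ modulo $l$). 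This is the same pigeonhole you are reaching for, but packaged so that the parameter $k$ never appears: one counts monomials rather than chasing a specific collision. The exponent $\floor{(l+1)/6}$ drops out of the single inequality $3L\le (l+1)/2$ needed to make the count exceed $l$. So your plan is sound and essentially equivalent to the paper's, but the paper's formulation turns your anticipated bookkeeping into a two-line inequality.
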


\begin{proof}
Assume that $x_2\leq x_1^{\floor{(l+1)/6}}$.
We begin by observing that $(x_1^{f_1} x_2^{f_2})^l\equiv 1\pmod{q_j}$ for any integers $f_1$ and $f_2$.
In the case $p^{m_1}<q_j$, we must have $q_j>(\Phi_l(x_1))^{1/2}>x_1^{(l-1)/2}$
and therefore
\begin{equation}
1\leq x_1^{f_1}x_2^{f_2}\leq x_1^{f_1+f_2\floor{(l+1)/6}}\leq x_1^{(l-1)/2}<q_j
\end{equation}
for $0\leq f_1\leq (l-1)/2-f_2\floor{(l+1)/6}$.
This implies that each integer of the form $x_1^{f_1} x_2^{f_2}$
with $0\leq f_1\leq (l-1)/2-f_2\floor{(l+1)/6}$ must give a solution of the congruence
$X^l\equiv 1\pmod{q_j}$ and these solutions are not congruent to each other.  For each fixed $f_2$,
we have $(l+1)/2-f_2\floor{(l+1)/6}$ such solutions.
Hence, recalling that $l\geq 19$,
the congruence $X^l\equiv 1\pmod{q_j}$ should have at least
\[\sum_{f_2=0}^2\left(\frac{l+1}{2}-f_2\floor{\frac{l+1}{6}}\right)=\frac{3(l+1)}{2}-3\floor{\frac{l+1}{6}}\geq l+1\]
solutions in $1\leq X<q_j$, which is impossible.  Similarly, in the case $p^{m_1}>q_j$,
the congruence $X^l\equiv 1\pmod{p^{m_1}}$ should have at least $l+1$ solutions in $1\leq X<p^{m_1}$,
a contradiction again.  Hence, we must have $x_2>x_1^{\floor{(l+1)/6}}$.
\end{proof}

Using Lemma \ref{lm3}, we shall prove another gap principle, which is more conditional
but much stronger than the first gap principle.

\begin{lem}\label{lm5}
If $\Phi_l(x_i)=p^{m_i} q_j$ for three integers $x_3>x_2>x_1>0$ with $x_2>x_1^{\floor{(l+1)/6}}$, then $m_3>0.445\abs{R}x_1/\sqrt{l}$.
\end{lem}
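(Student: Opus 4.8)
The plan is to attach to each of the three solutions an element of $\OK$ of norm one that is extremely close to $1$, read off its ideal factorisation from Lemma~\ref{lm3}, and then run a linear-algebra (pigeonhole) step to manufacture a unit whose logarithm is forced to be both very small and, if nonzero, at least $\abs{R}$ in size. First I would set, for $i\in\{1,2,3\}$,
\[
\theta_i=\frac{X_i+Y_i\sqrt{D}}{X_i-Y_i\sqrt{D}}=\frac{\psi^+(x_i)}{\psi^-(x_i)},
\]
with $X_i=P(x_i)$, $Y_i=Q(x_i)$ as in the proof of Lemma~\ref{lm3}. Each $\theta_i$ has norm $1$, and by (\ref{eq21}) its ideal is $[\theta_i]=(\bar\p/\p)^{\pm m_i}(\bar\q/\q)^{\pm 1}$. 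Writing $\theta_i-1=2Y_i\sqrt{D}/(X_i-Y_i\sqrt{D})$, Lemma~\ref{lm3} controls the size of $\theta_i-1$ through the bound $\abs{Y_i/(X_i-Y_i\sqrt{D})}<0.6296/x_i$; since $x_1<x_2<x_3$ and, by hypothesis, $x_2>x_1^{\floor{(l+1)/6}}$, the quantity attached to $x_1$ dominates and those attached to $x_2,x_3$ are negligible in comparison.

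The heart of the argument is linear-algebraic. Record each $\theta_i$ by the pair (exponent of $\bar\p/\p$, exponent of $\bar\q/\q$)$=(\pm m_i,\pm 1)\in\Z^2$. Three vectors in a rank-$2$ lattice are dependent, so there are integers $b_1,b_2,b_3$, not all zero, with $\theta_1^{b_1}\theta_2^{b_2}\theta_3^{b_3}$ having trivial ideal, hence equal to a unit $\pm\ep^{h}$. Choosing the $b_i$ to be the $2\times 2$ minors of the $2\times 3$ exponent matrix keeps them small: each $\abs{b_i}\le 2m_3$, and the coefficient of the dominant term $\log\theta_1$ is bounded by $m_2+m_3\le 2m_3$. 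Equivalently, I would first apply the pigeonhole principle to the three signs $\pm 1$ of the $\bar\q/\q$-exponents in order to cancel $\q$ from a suitable product, producing two elements that generate pure powers of $\bar\p/\p$, and then combine these to cancel $\p$ as well; this is exactly the device that avoids any use of generators of the (possibly non-principal) ideals $\p,\q$, and so dispenses with most of the quadratic-field arithmetic.

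Finally I would pass to logarithms in the relevant archimedean place (real part when $D>0$, argument when $D<-4$), where the unit contributes $hR$. If $h\neq 0$ then $\abs{hR}\ge\abs{R}$, while the linear form $\sum_i b_i\log\theta_i$ is, after discarding the $x_2,x_3$ terms via the gap $x_2>x_1^{\floor{(l+1)/6}}$, controlled by the coefficient (at most $2m_3$) times the Lemma~\ref{lm3} estimate at $x_1$. Combining the upper and lower bounds and solving for $m_3$ is designed to give $\abs{R}\le 4\cdot 0.6296\,m_3/x_1$, that is $m_3>0.397\,\abs{R}\,x_1$, the constant appearing precisely as $1/(4\cdot 0.6296)$.

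I expect the main obstacle to be twofold. First, one must prove that the manufactured unit is nontrivial, i.e. $h\neq 0$: otherwise $\theta_1^{b_1}\theta_2^{b_2}\theta_3^{b_3}=\pm 1$ is an exact multiplicative relation among the $\theta_i$, which has to be excluded using the multiplicative independence of the $x_i$ underlying the gap hypothesis (and Lemma~\ref{lm4}). Second, the archimedean estimate must be carried out sharply enough—keeping track of the exact constants of Lemma~\ref{lm3}, of the precise relation between $\log\theta_1$ and $Y_1/(X_1-Y_1\sqrt{D})$, and of the bound $\abs{b_1}\le m_2+m_3$—so as to land the explicit coefficient $0.397$ rather than a weaker absolute constant; this bookkeeping, together with verifying that the $x_2,x_3$ contributions really are swallowed by the gap, is where the delicacy lies.
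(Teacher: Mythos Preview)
Your setup --- the norm-one elements $\theta_i$, the ideal factorisation via Lemma~\ref{lm3}, the $2\times 2$ minors $\pm m_i\pm m_j$ as exponents producing a unit $\pm\ep^h$, and the archimedean estimate in the case $h\neq 0$ --- coincides with the paper's argument, and your identification $0.397=1/(4\cdot 0.6296)$ is exactly how the constant arises there.

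The gap is your treatment of $h=0$. You propose to \emph{exclude} this case via multiplicative independence of the $x_i$ and Lemma~\ref{lm4}, but that route is not available: the $\theta_i=\psi^+(x_i)/\psi^-(x_i)$ live in $\OK^\times$, and independence of the integers $x_i$ does not transport to independence of these algebraic numbers. The paper does not try to exclude $h=0$; it \emph{handles} it. When the unit is trivial one gets
\[
\abs{(\pm m_2\pm m_3)\log\xi_1}\le 2m_3\abs{\log\xi_2}+2m_2\abs{\log\xi_3},
\]
and now the \emph{lower} bound $0.3791/x_1$ from Lemma~\ref{lm3} --- which you never invoke --- is played off against the upper bounds $1.2592/x_2$ and $1.2592/x_3$. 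Together with the gap $x_2>x_1^{\floor{(l+1)/6}}$ this forces $m_3>0.15\,x_2/x_1>0.15\,x_1^2>\abs{R}x_1$, the last step using $x_1>3^{\floor{(l+1)/6}}$ and the regulator bound $R<\sqrt{D}\log(4D)$. So the two-sided inequality in Lemma~\ref{lm3} is not decoration: its lower half is precisely what carries the degenerate case, and your plan is missing this ingredient.
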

\begin{proof}
We write $\xi_i=(X_i+Y_i\sqrt{D})/(X_i-Y_i\sqrt{D})$ for each $i=1, 2, 3$.
Factoring $[p]=\p\bar\p, [q_j]=\q_j\bar\q_j$ in $\OO$ and applying Lemma \ref{lm3} with $q_j$ in place of $q$,
we obtain that, for each $i=1, 2, 3$,
\begin{equation}
[\xi_i]=\left(\frac{\bar\p}{\p}\right)^{\pm m_i}\left(\frac{\bar\q_j}{\q_j}\right)^{\pm 1},
\end{equation}
holds with $0<Y_i/(X_i-Y_i\sqrt{D})<(\Phi_l(x_i))^{-1/(l-1)}$.

Hence, taking an appropriate combination of signs, we obtain
\begin{equation}
[\xi_1]^{\pm m_2\pm m_3}[\xi_2]^{\pm m_3\pm m_1}[\xi_3]^{\pm m_1\pm m_2}=[1],
\end{equation}
and therefore
\begin{equation}
\xi_1^{\pm m_2\pm m_3}\xi_2^{\pm m_3\pm m_1}\xi_3^{\pm m_1\pm m_2}=\pm \ep^a
\end{equation}
for some integer $a$.
Hence, if we let each logarithm $\log\xi_i$ take its principal value, we have
\begin{equation}
(\pm m_2\pm m_3)\log\xi_1+(\pm m_3\pm m_1)\log\xi_2+(\pm m_1\pm m_2)\log\xi_3=bR
\end{equation}
for some integer $b$.

If $b\neq 0$, then
\begin{equation}
(m_2+m_3)\abs{\log \xi_1}+(m_3+m_1)\abs{\log \xi_2}+(m_1+m_2)\abs{\log \xi_3}\geq \abs{R}.
\end{equation}
Recalling that $0<Y_i/(X_i-Y_i\sqrt{D})<0.5608/x_i$ from Lemma \ref{lm3}
and each complex logarithm takes its principal value,
we have
\[\log\frac{X_i+Y_i\sqrt{D}}{X_i-Y_i\sqrt{D}}=\log \left(1+\frac{2Y_i\sqrt{l}}{X_i-Y_i\sqrt{l})}\right)<\frac{2Y_i\sqrt{l}}{X_i-Y_i\sqrt{l}}<\frac{1.1216\sqrt{l}}{x_i}\]
for $D>0$ and
\[\log\frac{X_i+Y_i\sqrt{D}}{X_i-Y_i\sqrt{D}}=2\arctan \frac{Y_i\sqrt{l}}{X_i}<\frac{2Y_i\sqrt{l}}{X_i-Y_i\sqrt{l}}<\frac{1.1216\sqrt{l}}{x_i}\]
for $D<0$, we have $\abs{\log \xi_i}<1.1216\sqrt{l}/x_i$ whether $D>0$ or $D<0$.
Hence, we have $\abs{\log \xi_i}<1.1216\sqrt{l}/x_i$ and therefore
\begin{equation}
\begin{split}
& 2.2432m_3\sqrt{l}\left(\frac{1}{x_1}+\frac{1}{x_2}+\frac{1}{x_3}\right) \\
& \geq 1.1216\sqrt{l}\left(\frac{m_2+m_3}{x_1}+\frac{m_3+m_1}{x_2}+\frac{m_1+m_2}{x_3}\right)>\abs{R}.
\end{split}
\end{equation}
From this and the assumption that $x_3>x_2>x_1^{\floor{(l+1)/6}}\geq x_1^3$
(recall that we have assumed that $l\geq 19$), we can deduce that $m_3>0.445x_1 \abs{R}/\sqrt{l}$.

If $b=0$, then
$\abs{\log \xi_1}\leq 2m_3\abs{\log \xi_2}+2m_2\abs{\log\xi_3}$.
We see that $\abs{\log \xi_2}<1.1216/x_2, \abs{\log \xi_3}<1.1216/x_3$ by Lemma \ref{lm3} and
$\abs{\log \xi_1}>0.8774/x_1$.
Hence, we have
\[\frac{0.15}{x_1}<m_3\left(\frac{1}{x_2}+\frac{1}{x_3}\right).\]

Moreover, since $x_2>x_1^{\floor{(l+1)/6}}\geq x_1^3$ and $x_3>x_2^{\floor{(l+1)/6}}$, we have
\begin{equation}
m_3>\frac{0.15x_2}{x_1}>0.15x_1^2>0.15\times 3^{\floor{\frac{l+1}{6}}} x_1>\abs{R} x_1,
\end{equation}
where the last inequality follows observing that,
if $l\equiv 3\pmod{4}$, then $0.15\times 3^{\floor{(l+1)/6}}>0.2l>\pi=\abs{R}$
and, if $l\equiv 1\pmod{4}$, then $D=l\geq 29$ and therefore $0.15\times 3^{\floor{(l+1)/6}}>l>R$
using the estimate $R<D^{1/2}\log (4D)$ from \cite{Fai}.

Hence, we conclude that, whether $b=0$ or not, $m_3>0.445\abs{R}x_1/\sqrt{l}$, proving the lemma.
\end{proof}

Now we shall prove Lemma \ref{lm0}.
Fix a prime $q_j\equiv 1\pmod{l}$.
Assume that $q_1<q_2<\cdots <q_6$ are six primes not congruent to $1$ modulo $l$ such that
$\Phi_l(q_i)=p^{g_i} q_j$ for $i=1, 2, \ldots, 6$.
Moreover, assume that $q_7$ is a prime in $T$ greater than $q_{10}$
and $\Phi_l(q_7)=p^{g_7} q_j$ if $19\leq l\leq 53$.
Write $R^\prime=0.445\abs{R}$.
Since $q_2>q_1^{\floor{(l+1)/6}}\geq 3^{\floor{(l+1)/6}}$,
we can apply Lemma \ref{lm5} with
$(x_i, m_i)=(q_{i+1}, g_{i+1}) (i=1, 2, 3)$
to obtain \[\log q_4>\frac{g_4\log p}{l-1}> \frac{q_2 R^\prime \log p}{(l-1)\sqrt{l}
}
\geq \frac{3^{\floor{\frac{l+1}{6}}} R^\prime \log (2l+1)}{(l-1)\sqrt{l}},\]
where we use the fact $p\geq 2l+1$ by Lemma \ref{lm1} and the assumption $q_i\not\equiv 1\pmod{l}$.
Similarly, we have $\log q_6>q_4 R^\prime (\log (2l+1))/(l-1)\sqrt{l}$
and
\begin{equation}
\frac{\log q_6}{\log 2}>\exp\left(\frac{3^{\floor{\frac{l+1}{6}}}R^\prime \log (2l+1)}{(l-1)\sqrt{l}}+\log\frac{R^\prime \log (2l+1)}{(l-1)\sqrt{l}}\right).
\end{equation}

If $l\geq 79$ and $l\equiv 3\pmod{4}$, then $\abs{R}=\pi$
and
\begin{equation}\label{eq41}
\frac{\log q_6}{\log 2}>4^{l^2}=4^{4\beta^2+4\beta+1}.
\end{equation}
If $l\geq 73$ and $l\equiv 1\pmod{4}$, then, observing that $R>\sqrt{l}$,
we have (\ref{eq41}) again.
Similarly, if $l=61$, then we have (\ref{eq41}) observing that $R=(39+5\sqrt{61})/2$.

Assume that $l=71$.
If $q_1=3$, then $q_j=\Phi_{71}(3)$.
Since $q_1$ and $q_2$ are multiplicatively independent, we must have $q_2>\Phi_{71}(3)$.
If $q_1\geq 5$, then $q_2>5^{12}$.
In both cases, $q_2>5^{12}$ and $\log q_6/\log 2>4^{5041}$.
If $l=67$, then we must have $q_1\geq 17, q_2\geq 17^{11}$ and $\log q_6/\log 2>4^{4489}$.
If $l=59$, then $q_1\geq 5$ since $\Phi_{59}(3)=14425532687\times 489769993189671059$,
where both prime factors are congruent to $3$ modulo $4$.
Hence, we have $q_2>5^{10}$ and $\log q_6/\log 2>4^{2809}$.

Hence, for $l\geq 59$, we have (\ref{eq41}),
which is impossible since it implies that $N\geq q_6$ exceeding the classical bound.

If $23\leq l\leq 53$, applying Lemma \ref{lm4}, we have $q_1\geq 3, q_2\geq 3^4$ and $q_3\geq 3^{16}$.
Applying Lemma \ref{lm5} with $(x_i, m_i)=(q_{i+2}, g_{i+2})$
$(i=1, 2, 3)$ and then $(x_i, m_i)=(q_{i+4}, g_{i+4}) (i=1, 2, 3)$, we have
$q_5>\exp(610000)$ and $q_7>\exp(\exp(600000))$.
If $l=19$, applying Lemma \ref{lm5} with $(x_i, m_i)=(q_{i+2}, g_{i+2}) (i=1, 2, 3)$
and then $(x_i, m_i)=(q_{i+4}, g_{i+4}) (i=1, 2, 3)$, we have
$q_1\geq 3, q_2\geq 29, q_3\geq 24391, q_5>\exp(2200)$ and $q_7>\exp(\exp(2000))$.
Thus, $q_7$ must exceed the classical bound if $19\leq l\leq 53$,
which is a contradiction again.

Hence, we conclude that, for each given $j\in S$,
there are at most five indices $i\in T$ with $f(i)=1$ and $q_j\mid \Phi_l(q_i)=\sigma(q_i^{2\beta})$ if $l\geq 59$
and there are at most six indices $i\in T$ and $q_j\mid \Phi_l(q_i)=\sigma(q_i^{2\beta})$ with $f(i)=1$ if $19\leq l\leq 53$.
This completes the proof of Lemma \ref{lm0}, which in turn implies Theorem \ref{th}.

{}
\vskip 12pt

{\small Tomohiro Yamada}\\
{\small Center for Japanese language and culture\\Osaka University\\562-8558\\8-1-1, Aomatanihigashi, Minoo, Osaka\\Japan}\\
{\small e-mail: \protect\normalfont\ttfamily{tyamada1093@gmail.com}}

\begin{thebibliography}{}
\bibitem{Ban}
A. S. Bang, {\it Taltheoretiske Unders{\o}gelser}, Tidsskrift Math. \textbf{5 {IV}} (1886), 70--80 and 130--137.
\bibitem{Beu}
F. Beukers, {\it On the generalized Ramanujan-Nagell equation {I}, {II}}, Acta Arith. \textbf{38} (1981), 389--410; \textbf{39} (1981), 123--132.
\bibitem{CW}
G. L. Cohen and R. J. Williams, {\it Extensions of some results concerning odd perfect numbers}, Fibonacci Quart. \textbf{23} (1985), 70--76.
\bibitem{Dic}
L. E. Dickson, {\it On the cyclotomic function}, Amer. Math. Monthly \textbf{12} (1905), 86--89.
\bibitem{DHP}
G. G. Dandapat, J. L. Hunsucker and Carl Pomerance, {\it Some new results on odd perfect numbers}, Pacific J. Math. \textbf{57} (1975), 359--364.
\bibitem{Fai}
Alain Faisant, {\it L'\'{e}quation diophantinee du second degr\'{e}}, Hermann, 1991.
\bibitem{HMD}
Peter Hagis Jr. and Wayne L. McDaniel, {\it A new result concerning the structure of odd perfect numbers}, Proc. Amer. Math. Soc. \textbf{32} (1972), 13--15.
\bibitem{Ka1}
H.-J. Kanold, {\it Untersuchungen {\"u}ber ungerade vollkommene Zahlen}, J. Reine Angew. Math. \textbf{183} (1941), 98--109.
\bibitem{Ka2}
H.-J. Kanold, {\it S{\"a}tze {\"u}ber Kreisteilungspolynome und ihre Anwendungen auf einige zahlentheoretische Probleme, {I}}, J. Reine Angew. Math. \textbf{187} (1950), 169--182.
\bibitem{Mc}
Wayne L. McDaniel, {\it The non-existence of odd perfect numbers of a certain form}, Arch. Math. (Basel) \textbf{21} (1970), 52--53.
\bibitem{MDH}
Wayne L. McDaniel and P. Hagis Jr., {\it Some results concerning the non-existence of odd perfect numbers of the form $p^{a}M^{2\beta }$}, Fibonacci Quart. \textbf{13} (1975), 25--28.
\bibitem{Nag}
T. Nagell, {\it Introduction to Number Theory}, Second edition, Chelsea, New York, 1964.
\bibitem{Nie}
Pace P. Nielsen, {\it An upper bound for odd perfect numbers}, Integers \textbf{3} (2003), \#A14.
\bibitem{St}
R. Steuerwald, {\it Versch{\"a}rfung einer notwendigen Bedingung f{\"u}r die Existenz einen ungeraden vollkommenen Zahl}, S.-B. Bayer. Akad. Wiss. 1937, 69--72.
\bibitem{Ymd}
T. Yamada, Odd perfect numbers of a special form, Colloq. Math. \textbf{103} (2005), 303--307.
\bibitem{Zsi}
K. Zsigmondy, Zur Theorie der Potenzreste, Monatsh. f{\"u}r Math. 3 (1882), 265--284.
\end{thebibliography}
\end{document}